\numberwithin{equation}{section}
\newtheoremstyle{myplain}
                {}          
                {}          
                {\itshape}  
                {20pt}          
                {\scshape} 
                {.}         
                { }         
                {}          
\theoremstyle{myplain}
\newtheoremstyle{mydefinition}
                {}          
                {}          
                {}  
                {20pt}          
                {\scshape} 
                {.}         
                { }         
                {}          
\theoremstyle{mydefinition}
\theoremstyle{plain}
\newtheorem{theorem}{Theorem}[section]
\newtheorem{lemma}[theorem]{Lemma}
\newtheorem{corollary}[theorem]{Corollary}
\theoremstyle{definition}
\renewcommand{\leq}{\leqslant}
\renewcommand{\geq}{\geqslant}
\newsavebox{\proofbox}
\savebox{\proofbox}{\begin{picture}(7,7)%
  \put(0,0){\framebox(7,7){}}\end{picture}}
\def\Q{{\mathbb Q}}
\def\emph#1{{\it #1}}
\def\textbf#1{{\bf #1}}
\title{Some Series related to Extended Riemann Hypothesis for Dedekind zeta functions}
\author{Muhammad Atif Zaheer}
\address{Department of Mathematics\\Pennsylvania State University, University Park\\ State College PA 16802}
\email{mvz5421@psu.edu}
\date{}
\keywords{Dedekind zeta function, Extended Riemann Hypothesis, Riemann xi function}
\subjclass{11M26, 11R42}
\begin{document}

\begin{abstract}
We obtain closed form of some infinite series involving derivatives of an analogue of the Riemann xi function for Dedekind zeta function and nontrivial zeros of Dedekind zeta function assuming the Extended Riemann Hypothesis. Conversely, we prove that if this closed form holds, then all of the zeros of Dedekind zeta function beyond a certain height lie on the critical line. This yields a large number of equivalent statements of Riemann Hypothesis.

\end{abstract}

\maketitle

\section{Introduction}

The Dedekind zeta function, $\zeta_K(s)$, associated to a number field $K$ of degree $n$ over $\Q$, is defined for $\sigma > 1$ as the absolutely convergent series
\[
\zeta_K(s) = \sum_{\mathfrak{a}} \frac{1}{N_{K/\Q}(\mathfrak{a})^s},
\]
where the sum runs through all nonzero ideals $\mathfrak{a}$ of the ring of integers $\mathcal{O}_K$ of $K$ and $N(\mathfrak{a})$ denotes the norm of a nonzero ideal $\mathfrak{a}$ in $\mathcal{O}_K$, i.e., $N(\mathfrak{a}) = [\mathcal{O}_K : \mathfrak{a}]$. For the special case $K = \mathbb{Q}$,  the Dedekind zeta function reduces to the classical Riemann zeta function; $\zeta_{\mathbb{Q}}(s) = \zeta(s)$.

Analogous to the Riemann zeta function, Dedekind zeta functions also have Euler product representation
\begin{equation} \label{euler product}
\zeta_K(s) = \prod_{\mathfrak{p}}  \frac{1}{1 - N_{K/\Q}(\mathfrak{p})^{-s}}
\end{equation}
for $\sigma >1$, where the product is extended over all prime ideals $\mathfrak{p}$ of $\mathcal{O}_K$. This is the result of the uniqueness of prime factorization of nonzero ideals in $\mathcal{O}_K$. As a consequence, $\zeta_K(s)$ does not vanish in the half plane $\sigma > 1$.

As in the case of Riemann zeta function Dedekind zeta function also admits a unique analytic continuation to the whole complex plane except for a simple pole at $s = 1$. Dedekind proved the analytic class number formula, showing that $\zeta_K(s)$ has a simple pole at $s = 1$ with residue
\begin{equation} \label{class number formula}
\frac{2^{r_1 + r_2} \pi^{r_2} R_K h_K}{\omega_K |\Delta_K|^{1/2}},
\end{equation}
where $r_1$ and $r_2$ are the number of real embeddings and pairs of complex embeddings respectively, $R_K$ is the regulator of $K$, $h_K$ is the class number of $K$, $\omega_K$ is the number of roots of unity contained in $K$, and $\Delta_K$ is the discriminant of the field extension $K/\Q$. This is an instance of the ``local-global principle"; the Dedekind zeta function $\zeta_K(s)$ is assembled from local components, namely the prime ideals of $\mathcal{O}_K$, as expressed by the Euler product \eqref{euler product} yet it also reflects global invariants of the field in an integrated fashion, as is revealed by the class number formula \eqref{class number formula}.

\vspace{2pt}

Dedekind zeta function satisfies the functional equation
\[
\zeta_K(1-s) = |\Delta_K|^{s - 1/2} 2^{n(1-s)} \pi^{-ns} \left( \cos \frac{\pi s}{2} \right)^{r_1 + r_2}  \left( \sin \frac{\pi s}{2} \right)^{r_2} \Gamma(s)^n \zeta_K(s),
\]
which was discovered by Hecke \cite{hecke1917zetafunktion}. A detailed exposition of Hecke's proof of the functional equation for $\zeta_K(s)$ can be found in \cite[Chap.~7, §5]{Neukirch}. It reduces to the functional equation of $\zeta(s)$ for the case $K = \Q$. Furthermore, it can be seen from the above functional equation that $\zeta_K(s)$ has a zero of order $r_1 + r_2 - 1$ at $s = 0$ since the term involving cosine contributes a zero of order $r_1 + r_2$ at $s = 1$, $\zeta_K(s)$ has a simple pole at $s = 1$ and other terms are analytic and do not vanish at $s = 1$.

\section{Some preliminaries}

One of the most notoriously difficult problem in all of mathematics is the Riemann Hypothesis (RH) which asserts that all of the nontrivial trivial zeros $\rho = \beta + i \gamma$ of $\zeta(s)$ satisfy $\beta = 1/2$. One can conjecture a similar statement for Dedekind zeta functions: All of the zeros $\rho_K = \beta_K + i \gamma_K$ of $\zeta_K(s)$ satisfying $0 < \beta_K < 1$ lie on the critical line, i.e., $\beta_K = 1/2$. This is usually referred as the Extended Riemann Hypothesis (ERH). 

We have a de la Vall\'{e}e Poussin type zero-free region for $\zeta_K(s)$. If $n$ is the degree of field extension $K/\Q$ and $d$ is the discriminant, then there exists an absolute constant $c > 0$ such that $\zeta_K(s)$ has no zeros in the region 
\[
\sigma > 1 - \frac{c}{n^2 \log  (|d|(|t| + 2)^{n}) }
\]
\cite[Thm. 5.33]{iwaniec2021analytic}. There could potentially be a real zero very close to the line $\sigma  =1$ which is referred as Landau-Siegel zero. It is known that there could be at most one such zero and if it exists, then it must be simple. The existence or nonexistence of this zero has very important consequences. It was shown by Heath-Brown \cite{heath1983prime} that the nonexistence of Landau-Siegel zeros for Dirichlet L-functions implies twin prime conjecture.

We consider an analogue of the Riemann xi function, or also known as the completed Riemann zeta function, for Dedekind zeta function $\zeta_K(s)$ as
\begin{equation} \label{definition of Dedekind xi function}
\xi_K(s) = \frac{1}{2}s(s-1) |\Delta_K|^{s/2} 2^{(1-s)r_2} \pi^{-ns/2} \Gamma \left( \frac{s}{2} \right)^{r_1} \Gamma(s)^{r_2} \zeta_K(s).
\end{equation}
It is an entire function of order $1$ and satisfies the functional equation 
\begin{equation} \label{functional equation of completed Dedekind xi-function}
\xi_K(s) = \xi_K(1-s).
\end{equation}
$\xi_K(s)$ reduces to $\xi(s)$ for the case $K = \Q$. Moreover, $\xi_K(s)$ does not vanish at $s = 0$ and the zeros of $\xi_K(s)$ coincide with the zeros of $\zeta_K(s)$ in the critical strip $0 < \sigma < 1$.

\begin{lemma} \label{vanishing of xi_K at 1/2}
For $k$ odd, $\xi_K^{(k)}(1/2) = 0$.
\end{lemma}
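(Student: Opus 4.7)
The plan is to exploit the functional equation \eqref{functional equation of completed Dedekind xi-function}, namely $\xi_K(s)=\xi_K(1-s)$, which is the only structural fact we need. The point $s=1/2$ is the unique fixed point of the involution $s\mapsto 1-s$, so derivatives at $1/2$ should inherit a parity property.

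Concretely, I would differentiate the identity $\xi_K(s)=\xi_K(1-s)$ repeatedly in $s$. Since $\xi_K$ is entire (and in particular smooth), the chain rule gives
\[
\xi_K^{(k)}(s)=(-1)^k\,\xi_K^{(k)}(1-s)
\]
for every $k\geq 0$. Specializing at $s=1/2$ yields $\xi_K^{(k)}(1/2)=(-1)^k\,\xi_K^{(k)}(1/2)$, and for odd $k$ this forces $\xi_K^{(k)}(1/2)=0$.

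Equivalently, and perhaps more transparently, I would introduce the shifted function $g(s):=\xi_K(s+1/2)$. Then the functional equation translates to $g(s)=\xi_K(1/2+s)=\xi_K(1/2-s)=g(-s)$, so $g$ is an even entire function. An even entire function has a Taylor expansion at $0$ containing only even powers of $s$, i.e.\ $g^{(k)}(0)=0$ for all odd $k$. Since $g^{(k)}(0)=\xi_K^{(k)}(1/2)$, the claim follows.

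There is essentially no obstacle here: the argument uses only the functional equation and the smoothness of $\xi_K$ at $s=1/2$, both recorded above. No analytic information about the zeros, the Gamma factors, or the degree $n$ enters the proof.
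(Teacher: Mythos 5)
Your proof is correct and follows exactly the paper's argument: differentiate the functional equation $\xi_K(s)=\xi_K(1-s)$ repeatedly to get $\xi_K^{(k)}(s)=(-1)^k\xi_K^{(k)}(1-s)$ and set $s=1/2$. The reformulation via the even function $g(s)=\xi_K(s+1/2)$ is a pleasant restatement of the same idea, not a different route.
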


\begin{proof}
From the functional equation \eqref{functional equation of completed Dedekind xi-function} we readily obtain functional equation for $\xi_K^{(k)}(s)$ by repeated differentiation as
\[
\xi_K^{(k)}(s) = (-1)^{k} \xi_K^{(k)}(1-s).
\]
The result is now immediate by plugging $s = 1/2$.
\end{proof}

\begin{lemma}
If $m$ is the order of vanishing of $\zeta_K(s)$ at $s = 1/2$, then $m$ is even.
\end{lemma}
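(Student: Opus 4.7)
The plan is to transfer the question about $\zeta_K$ to a question about $\xi_K$, since for $\xi_K$ we already control the parity of nonzero derivatives by the preceding lemma. The key observation I would establish first is that the prefactor appearing in the definition \eqref{definition of Dedekind xi function},
\[
P(s) \colonequals \frac{1}{2}s(s-1) |\Delta_K|^{s/2} 2^{(1-s)r_2} \pi^{-ns/2} \Gamma\!\left(\frac{s}{2}\right)^{r_1} \Gamma(s)^{r_2},
\]
is analytic and nonvanishing in a neighbourhood of $s = 1/2$: the polynomial factor gives $\tfrac{1}{2}\cdot\tfrac{1}{2}\cdot(-\tfrac{1}{2}) = -1/8 \neq 0$, the exponential factors are obviously nonzero, and $\Gamma(1/4)$ and $\Gamma(1/2)$ are both nonzero (the gamma function has no zeros and its only poles are at nonpositive integers, avoided here). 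Hence $\xi_K(s) = P(s)\zeta_K(s)$ with $P(1/2)\neq 0$.

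From this it follows immediately that the order of vanishing of $\xi_K$ at $s = 1/2$ equals the order of vanishing of $\zeta_K$ at $s = 1/2$, namely $m$. Concretely, writing $\zeta_K(s) = (s-1/2)^m g(s)$ with $g(1/2)\neq 0$, we obtain $\xi_K(s) = (s-1/2)^m P(s)g(s)$ with $(Pg)(1/2)\neq 0$, so $\xi_K^{(m)}(1/2) \neq 0$ while $\xi_K^{(k)}(1/2) = 0$ for all $k < m$.

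Now I apply Lemma \ref{vanishing of xi_K at 1/2}: every odd derivative of $\xi_K$ vanishes at $1/2$. In particular, if $m$ were odd, then $\xi_K^{(m)}(1/2) = 0$, contradicting the previous paragraph. Therefore $m$ must be even.

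There is no real obstacle here; the only point that requires a moment of care is verifying that the prefactor $P$ is nonzero at $1/2$, which amounts to checking that none of the gamma factors hit a pole and that the polynomial factor does not vanish. Once that is in place, the lemma reduces to a direct application of Lemma \ref{vanishing of xi_K at 1/2}.
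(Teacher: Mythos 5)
Your proof is correct and follows essentially the same route as the paper's: both transfer the order of vanishing from $\zeta_K$ to $\xi_K$ via the nonvanishing of the prefactor at $s=1/2$ and then invoke Lemma \ref{vanishing of xi_K at 1/2} on the lowest nonvanishing derivative. You are slightly more careful than the paper in explicitly checking that the gamma and polynomial factors do not vanish at $1/2$, which the paper leaves as ``from the definition we see.''
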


\begin{proof}
If $\zeta_K(1/2) \neq 0$, then $m = 0$ and so the conclusion is trivial. From the definition \eqref{definition of Dedekind xi function} of $\xi_K(s)$ we see that $m$ is same as the order of vanishing of $\xi_K(s)$ at $s = 1/2$.  Now suppose that $\zeta_K(1/2) = 0$. Since $\xi_K(s)$ is entire, it admits power series expansion 
\[
\xi_K(s) = \sum_{k = 0}^{\infty} \frac{\xi_K^{(k)}(1/2)}{k!} \left(s - \frac{1}{2} \right)^{k}.
\]
Because $\xi_K(1/2) = 0$ and $m$ is the smallest power $k$ for which $\xi_K^{(k)}(1/2) \neq 0$, it follows by Lemma \ref{vanishing of xi_K at 1/2} that $m$ must be even.
\end{proof}

We now define
\[
\Xi_K(s) = \left(s - \frac{1}{2} \right)^{-m} \xi_K(s),
\]
where $m$ is the order of vanishing of $\zeta_K(s)$ at $s = 1/2$. Since $m$ is even, $\Xi_K(s)$ satisfies the functional equation 
\[
\Xi_K(s) = \Xi_K(1-s).
\]

\begin{lemma}
For $k$ odd, $\Xi_K^{(k)}(1/2) = 0$.
\end{lemma}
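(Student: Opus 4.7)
The plan is to mirror the argument used in the proof of Lemma \ref{vanishing of xi_K at 1/2}, this time applied to $\Xi_K$ rather than $\xi_K$. The key ingredient is the functional equation $\Xi_K(s) = \Xi_K(1-s)$ that was just established, which in turn relied on $m$ being even.

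First I would observe that $\Xi_K(s)$ is entire, and in particular analytic in a neighborhood of $s = 1/2$: since $m$ is by definition the order of vanishing of $\xi_K$ at $s = 1/2$, dividing $\xi_K(s)$ by $(s - 1/2)^m$ yields a function with only a removable singularity there, and away from $s=1/2$ the quotient is clearly entire. This justifies taking all higher derivatives at $1/2$ and validates term-by-term differentiation of the power series centered at $1/2$.

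Next I would differentiate both sides of $\Xi_K(s) = \Xi_K(1-s)$ a total of $k$ times with respect to $s$. Each differentiation on the right-hand side produces a factor of $-1$ by the chain rule, so after $k$ differentiations we obtain
\[
\Xi_K^{(k)}(s) = (-1)^{k}\, \Xi_K^{(k)}(1-s).
\]
Setting $s = 1/2$ gives $\Xi_K^{(k)}(1/2) = (-1)^k \, \Xi_K^{(k)}(1/2)$, and for odd $k$ this forces $\Xi_K^{(k)}(1/2) = 0$.

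There is no real obstacle here; the proof is essentially a verbatim copy of Lemma \ref{vanishing of xi_K at 1/2}, with the only substantive point being the verification that $\Xi_K$ is indeed differentiable at $1/2$ (a point already implicitly guaranteed by the definition of $m$ as the order of the zero of $\xi_K$ at $1/2$, together with the previous lemma showing that $m$ is even, which is why the construction of $\Xi_K$ made sense to begin with).
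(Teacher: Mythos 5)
Your proof is correct and matches the paper's intent exactly: the paper states this lemma without proof precisely because it follows verbatim from the argument of Lemma \ref{vanishing of xi_K at 1/2}, using the functional equation $\Xi_K(s) = \Xi_K(1-s)$ (valid since $m$ is even). Your additional remark that $\Xi_K$ is entire because $m$ is the order of vanishing of $\xi_K$ at $1/2$ is a welcome, if minor, piece of bookkeeping.
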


\begin{lemma} \label{vanishing of X_K at 1/2} 
For $k$ even, $(\Xi_K'/\Xi_K)^{(k)}(1/2) = 0$.
\end{lemma}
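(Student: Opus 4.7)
The plan is to exploit the symmetry of $\Xi_K$ about $s = 1/2$, transferred to the logarithmic derivative $L(s) := \Xi_K'(s)/\Xi_K(s)$, and then simply iterate. First, since $\Xi_K$ was defined precisely to remove the zero of $\xi_K$ of order $m$ at $s = 1/2$, we have $\Xi_K(1/2) \neq 0$, so $L(s)$ is holomorphic in a neighborhood of $1/2$ and $L^{(k)}(1/2)$ makes sense for every $k$.

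Next, I would differentiate the functional equation $\Xi_K(s) = \Xi_K(1-s)$ once to obtain $\Xi_K'(s) = -\Xi_K'(1-s)$. Dividing gives the key antisymmetry
\[
L(s) = -L(1-s).
\]
Now I would differentiate this identity in $s$ repeatedly. Each differentiation of the right-hand side picks up a factor of $-1$ from the inner derivative of $1-s$, so an easy induction on $k$ yields
\[
L^{(k)}(s) = (-1)^{k+1}\, L^{(k)}(1-s).
\]

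Finally, specializing to $s = 1/2$ gives $L^{(k)}(1/2) = (-1)^{k+1} L^{(k)}(1/2)$. For $k$ even this reads $L^{(k)}(1/2) = -L^{(k)}(1/2)$, forcing $(\Xi_K'/\Xi_K)^{(k)}(1/2) = 0$, as desired. There is no real obstacle here: the only thing to check carefully is that $\Xi_K$ does not vanish at $1/2$ (so that the logarithmic derivative is well-defined there), which is immediate from the definition of $\Xi_K$ via division by $(s - 1/2)^m$.
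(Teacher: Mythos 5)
Your proof is correct and follows the approach the paper evidently intends (the lemma is stated without proof there, but it is the natural consequence of the functional equation $\Xi_K(s)=\Xi_K(1-s)$): the antisymmetry $L(s)=-L(1-s)$ of the logarithmic derivative, iterated differentiation giving $L^{(k)}(s)=(-1)^{k+1}L^{(k)}(1-s)$, and evaluation at $s=1/2$. Your check that $\Xi_K(1/2)\neq 0$, so that $L$ is holomorphic at $1/2$, is the right point to verify and is indeed immediate from the definition of $\Xi_K$.
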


Because $\Xi_K(s)$ is complex-conjugate symmetric, i.e., $\overline{\Xi_K(s)} = \Xi_K(\overline{s})$, so are its higher derivatives and as a result $\Xi_K^{(k)}(s)$ is real-valued on the real-axis. For a detailed proof see \cite[Lem. 3]{nguyen2025some}.

Since $\Xi_K(s)$ is an entire function of order $1$, it can be expressed as a Hadamard product
\begin{equation} \label{hadamard product}
\Xi_K(s) = e^{A + Bs} \prod_{\rho_K \neq 1/2} \left( 1 - \frac{s}{\rho_K} \right) e^{s/\rho_K},
\end{equation}
where $A$ and $B$ are constants and the product runs over the zeros of $\zeta_K(s)$ that are distinct from $1/2$ and lie in the critical strip $0 < \sigma < 1$. This product converges uniformly on compact sets not containing any zeros $\rho_K \neq 1/2$.

\vspace{2pt}

\section{Main result}

A plethora of equivalent statements of RH have been discovered in the last century. A detailed account of both arithmetic and analytic equivalents can be found in \cite{Broughan1} and \cite{Broughan2} respectively. Recently, Suman and Das \cite{Suman} showed that RH is equivalent to 
\[
\sum_{\rho} \frac{1}{|1/2 - \rho|^2} = \frac{\xi''(1/2)}{\xi(1/2)}.
\]
Later in \cite{suman2022note}, they showed that RH holds if and only if
\[
\sum_{\rho} \frac{1}{|1/2 - \rho|^4} = \frac{1}{2} \frac{\xi''(1/2)^2}{\xi (1/2)^2} - \frac{1}{6} \frac{\xi^{(4)}(1/2)}{\xi(1/2)}.
\]
Very recently, Nguyen \cite{nguyen2025some} extended Suman and Das result to Dedekind zeta functions and showed that ERH for $\zeta_K(s)$ is equivalent to 
\[
\sum_{\rho_K \neq 1/2} \frac{1}{|1/2 - \rho_K|^2} = \frac{\Xi_K''(1/2)}{\Xi_K(1/2)}.
\]
We generalize Suman and Das result and obtain a large number of equivalent statements for RH involving closed form of sums involving $|1/2 - \rho|^{k + 1}$, where $k$ is an odd positive integer. We do so in a general setting of Dedekind zeta functions, as is done by Nguyen for the special case $k = 1$.

\begin{theorem}
Let $K$ be a number field and let $k > 1$ be an odd positive integer. If ERH holds, then 
\begin{equation} \label{main result}
\frac{(-1)^{(k - 1)/2}}{k!} \left( \frac{\Xi_K'}{\Xi_K} \right)^{(k)} \left(\frac{1}{2} \right) = \sum_{\rho_K \neq 1/2} \frac{1}{|1/2 - \rho_K|^{k + 1}},
\end{equation}
where the sum runs over all nontrivial zeros $\rho_K$ of $\zeta_K(s)$ distinct from $1/2$. Moreover, if the above equality holds, then $\beta_K = 1/2$ for $|\gamma_K| > (1/2)\cot (2 \pi/(k + 1))$.
\end{theorem}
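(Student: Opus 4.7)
The plan is to pass to the logarithmic derivative of the Hadamard product \eqref{hadamard product} and differentiate $k$ times, obtaining the master identity
$$\left( \frac{\Xi_K'}{\Xi_K} \right)^{(k)}(s) = (-1)^k k! \sum_{\rho_K \neq 1/2} \frac{1}{(s - \rho_K)^{k+1}}, \quad k \geq 1,$$
valid by absolute convergence of the right-hand side (since $\sum 1/|\rho_K|^2 < \infty$). Evaluating at $s = 1/2$ recasts both sides of \eqref{main result} as sums over nontrivial zeros, and both implications reduce to analyzing this identity.

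For the forward direction, ERH gives $1/2 - \rho_K = -i\gamma_K$, so $(1/2 - \rho_K)^{k+1} = (-i)^{k+1} \gamma_K^{k+1}$. Since $k$ is odd, $(-i)^{k+1} = (-1)^{(k+1)/2}$ and $\gamma_K^{k+1} = |1/2 - \rho_K|^{k+1}$, turning each term $1/(1/2 - \rho_K)^{k+1}$ into $(-1)^{(k+1)/2}/|1/2 - \rho_K|^{k+1}$. Tracking the overall sign $(-1)^k (-1)^{(k+1)/2} = (-1)^{(k-1)/2}$ (using $k$ odd) then yields \eqref{main result}.

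For the converse, assume \eqref{main result} and combine it with the master identity. Writing $1/2 - \rho_K = r_\rho e^{i\theta_\rho}$ and using the conjugate symmetry of the zero set to observe that the sum is real, one arrives at
$$\sum_{\rho_K \neq 1/2} \frac{1 - (-1)^{(k+1)/2} \cos((k+1)\theta_\rho)}{r_\rho^{k+1}} = 0.$$
The decisive observation is that every numerator is non-negative, so every summand must individually vanish. This forces $\cos((k+1)\theta_\rho) = (-1)^{(k+1)/2}$, hence $\theta_\rho \equiv \pi/2 \pmod{2\pi/(k+1)}$ for every nontrivial zero; thus $\theta_\rho$ lies in a discrete set of $k+1$ values in $(-\pi, \pi]$, of which only $\pm \pi/2$ correspond to zeros on the critical line.

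Finally, a zero $\rho_K$ with $\beta_K \neq 1/2$ must have $\theta_\rho$ among the remaining $k - 1$ admissible angles, which are separated from $\pm \pi/2$ by angular distance at least $2\pi/(k+1)$; therefore $|\tan \theta_\rho| \leq \cot(2\pi/(k+1))$, with equality at $\theta_\rho = \pm \pi/2 \pm 2\pi/(k+1)$. Combining with $\tan \theta_\rho = -\gamma_K/(1/2 - \beta_K)$ and $|1/2 - \beta_K| < 1/2$ yields $|\gamma_K| < (1/2)\cot(2\pi/(k+1))$, and the contrapositive gives the height bound. The main technical obstacle is careful bookkeeping of the signs $(-1)^{(k \pm 1)/2}$ and $(-1)^{(k+1)/2}$ throughout, since miscounting by one parity breaks the crucial non-negativity identity that drives the converse direction.
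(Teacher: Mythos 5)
Your proof is correct and follows essentially the same route as the paper: the $k$-fold differentiated logarithmic derivative of the Hadamard product evaluated at $s=1/2$, non-negativity of each summand forcing $\cos((k+1)\theta_\rho)=(-1)^{(k+1)/2}$, and the resulting discrete set of admissible arguments combined with $|1/2-\beta_K|<1/2$ to obtain the height bound. The only differences are cosmetic: you substitute $1/2-\rho_K=-i\gamma_K$ directly in the forward direction instead of passing through the real-part identity, and you fold the paper's two cases $k\equiv 1,3 \pmod 4$ into the single inequality $1-(-1)^{(k+1)/2}\cos((k+1)\theta_\rho)\geq 0$.
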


\begin{proof}
Taking the logarithmic derivative of the Hadamard product \eqref{hadamard product} we obtain
\[
\frac{\Xi_K'}{\Xi_K}(s) = B + \sum_{\rho_K \neq 1/2} \left( \frac{1}{s - \rho_K} + \frac{1}{\rho_K} \right).
\]
Because the sum converges uniformly on compact subsets not containing the nontrivial zeros of $\zeta_K(s)$ we can differentiate term by term and deduce that
\[
\left( \frac{\Xi_K'}{\Xi_K} \right)'(s) = - \sum_{\rho_K \neq 1/2} \frac{1}{(s - \rho_K)^2}.
\]
By repeated differentiation we end up with the identity
\[
\frac{(-1)^k}{k!}\left( \frac{\Xi_K'}{\Xi_K} \right)^{(k)}(s) = \sum_{\rho_K \neq 1/2} \frac{1}{(s - \rho_K)^{k + 1}}
\]
which holds for any positive integer $k$. For $k$ even it immediately follows from Lemma \ref{vanishing of X_K at 1/2} that
\[
\sum_{\rho_K \neq 1/2} \frac{1}{(1/2 - \rho_K)^{k + 1}} = 0.
\]
But this is well-known and follows simply due to the symmetry of zeros $\rho_K$ under conjugation and reflection across the critical line. We now assume that $k$ is odd. By noting that $(\Xi_K'/\Xi_K)^{(k)} (1/2)$ is real we obtain
\begin{align*}
\frac{(-1)^k}{k!}\left( \frac{\Xi_K'}{\Xi_K} \right)^{(k)} \left( \frac{1}{2} \right) &= \frac{1}{2}\sum_{\rho_K \neq 1/2} \frac{1}{(1/2 - \rho_K)^{k + 1}} + \frac{1}{2} \sum_{\rho_K \neq 1/2} \frac{1}{(1/2 - \overline{\rho_K})^{k + 1}}  \\
&= \frac{1}{2} \sum_{\rho_K \neq 1/2} \frac{(1/2 - \rho_K)^{k + 1} + (1/2 - \overline{\rho_K})^{k + 1}}{|1/2 - \rho_K|^{2(k + 1)}} \\
&= \sum_{\rho_K \neq 1/2} \frac{\Re(1/2 - \rho_K)^{k + 1}}{|1/2 - \rho_K|^{2(k + 1)}}.
\end{align*}
If ERH holds, then $\Re(1/2 - \rho_K)^{k + 1} = (-1)^{(k + 1)/2} |1/2 - \rho_K|^{k + 1}$ and so we get \eqref{main result}.

Now suppose that \eqref{main result} holds. Then we have 
\begin{equation} \label{second main equation}
\sum_{\rho_K \neq 1/2} \frac{\Re(1/2 - \rho_K)^{k + 1} - (-1)^{(k + 1)/2}|1/2 - \rho_K|^{k + 1}}{|1/2 - \rho_K|^{2(k + 1)}} = 0.
\end{equation}
If $k \equiv 1 \pmod{4}$, then the numerator in the summand is always nonnegative and so we have $\Re(1/2 - \rho_K)^{k + 1} = - |1/2 - \rho_K|^{k + 1}$ for every zero $\rho_K \neq 1/2$. Let $1/2 - \rho_K = |1/2 - \rho_K| e({ \phi_{\rho_K}})$, where we use $e(x)$ to denote $e^{ix}$ and $\phi_{\rho_K}$ is the principal argument of $1/2 - \rho_K$. Then it follows that
\[
\Re \left( \left|1/2 - \rho_K \right|^{k + 1} e({(k + 1) \phi_{\rho_K}}) \right)= - \left|1/2 - \rho_K \right|^{k + 1}
\]
for every zero $\rho_K \neq 1/2$, which in turn implies that $\cos((k + 1) \phi_{\rho_K}) = -1$ and thus $\phi_{\rho_K} =((2n + 1) \pi)/(k + 1)$, where $0 \leq n \leq  k$. Because $-1/2 < 1/2 - \beta_K < 1/2$, it follows that if $|\gamma_K|> (1/2) \cot (2 \pi/(k + 1))$, then $\tan^{-1}(2|\gamma_K|) > \pi/2 - 2\pi/(k + 1)$. This yields $|\phi_{\rho_K} - \pi/2| < 2 \pi/(k + 1)$  or $|\phi_{\rho_K} - 3 \pi/2| < 2 \pi/(k + 1)$. Consequently we must have either $\phi_{\rho_K} = \pi/2$ or $3 \pi/2$  and hence $1/2- \beta_K =  \Re(1/2  -\rho_K) = |1/2 - \rho_K|\cos(\phi_{\rho_K}) = 0$, i.e.,  $\beta_K = 1/2$.

If $k \equiv 3 \pmod{4}$. Then numerator in each summand in the sum \eqref{second main equation} is nonpositive for every zero $\rho_K \neq 1/2$. Thus $\Re (1/2 - \rho_K)^{k + 1} = |1/2 - \rho_K|^{k + 1}$ and so
\[
\Re \left(\left| 1/2 - \rho_K \right|^{k + 1} e({(k + 1) \phi_{\rho_K}}) \right)=  \left| 1/2 - \rho_K \right|^{k + 1}
\]
for every zero $\rho_K \neq 1/2$, which leads to $\cos((k + 1) \phi_{\rho_K}) = 1$ and so $\phi_{\rho_K} = 2\pi n/(k + 1)$, where $0 \leq n \leq k$. Again it follows that if $|\gamma_K| > (1/2) \cot (2 \pi/(k + 1))$, then we must have $\phi_{\rho_K} =  \pi/2$ or $3 \pi/2$. Hence $1/2 - \beta_K = \Re(1/2 - \rho_K) = |1/2 - \rho_K| \cos (\phi_{\rho_K}) = 0$, i.e., $\beta_K = 1/2$. This completes the proof.
\end{proof}

Observe that the height beyond which all zeros $\rho_K$ of $\zeta_K(s)$ lie on the critical line is uniform in $K$ and depends only on the order $k$ of the derivative of $\Xi_K'/\Xi_K$.

\begin{corollary}
The statement \eqref{main result} is equivalent to ERH for $k = 3$.
\end{corollary}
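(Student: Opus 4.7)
The plan is a direct specialization of the theorem to $k=3$. The forward implication---ERH $\Rightarrow$ \eqref{main result}---is already established by the theorem for every odd $k > 1$, so no further argument is required in that direction.

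For the converse, I would invoke the theorem's second conclusion: if \eqref{main result} holds, then $\beta_K = 1/2$ for every nontrivial zero $\rho_K \neq 1/2$ satisfying $|\gamma_K| > (1/2)\cot(2\pi/(k+1))$. Specializing to $k = 3$, one has $2\pi/(k+1) = \pi/2$, and the threshold collapses to $(1/2)\cot(\pi/2) = 0$. Consequently, the hypothesis $|\gamma_K| > 0$ applies to every complex zero in the critical strip, forcing $\beta_K = 1/2$ for each of them. This is exactly the content of ERH.

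The entire corollary therefore rests on the trigonometric identity $\cot(\pi/2) = 0$, and I expect no serious obstacle: $k = 3$ is the unique odd integer greater than $1$ for which $2\pi/(k+1) = \pi/2$ (equivalently, the only case in which the theorem's height bound degenerates to zero). This is what promotes the $k=3$ specialization from a conditional statement on zeros of sufficiently large height to a genuine equivalence with ERH, and no supplementary estimate or case analysis beyond the theorem is needed.
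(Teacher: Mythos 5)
Your proposal is correct and coincides with the paper's own proof, which likewise reduces the corollary to the observation that $\cot(2\pi/(k+1)) = \cot(\pi/2) = 0$ when $k = 3$, so the theorem's height threshold degenerates and covers every zero with $|\gamma_K| > 0$. Your added remark that $k=3$ is the unique odd $k>1$ with this property is a nice touch but not needed for the proof.
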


\begin{proof}
Follows simply by observing that $\cot (2 \pi/(k + 1)) = 0$ for $k = 3$.
\end{proof}

\begin{corollary} \label{main corollary}
Let $k$ be an odd positive integer. If RH holds, then 
\begin{equation} \label{equivalent of RH}
\frac{(-1)^{(k - 1)/2}}{k!} \left( \frac{\xi'}{\xi} \right)^{(k)} \left( \frac{1}{2} \right) = \sum_{\rho} \frac{1}{|1/2 - \rho|^{k + 1}},
\end{equation}
where the sum is taken over all nontrivial zeros $\rho$ of $\zeta(s)$. Moreover, if the above equality holds, then $\beta = 1/2$ for $|\gamma| > (1/2) \cot(2 \pi/(k + 1))$
\end{corollary}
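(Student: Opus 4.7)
The plan is to deduce the corollary as a direct specialization of the main theorem to the case $K = \Q$. First, I would observe that plugging $r_1 = 1$, $r_2 = 0$, $n = 1$, and $|\Delta_\Q| = 1$ into \eqref{definition of Dedekind xi function} shows that $\xi_\Q(s)$ coincides with the classical Riemann xi function $\xi(s)$, so that ERH for $\zeta_\Q$ is just RH for $\zeta(s)$, and the statement \eqref{main result} of the theorem becomes structurally identical to \eqref{equivalent of RH} once one replaces $\Xi_K$ by $\xi$.

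Next, I would verify that the auxiliary function $\Xi_\Q$ in fact collapses to $\xi$, i.e.\ that the order $m$ of vanishing of $\zeta$ at $1/2$ is zero. This follows from the classical numerical evaluation $\zeta(1/2) \approx -1.4603545$, which is nonzero. Consequently the left-hand side of \eqref{main result} reduces to the left-hand side of \eqref{equivalent of RH}, and the restriction $\rho_K \neq 1/2$ in the sum in \eqref{main result} imposes nothing new, since $1/2$ is not a zero of $\zeta(s)$; thus that sum is simply the sum over all nontrivial zeros $\rho$ of $\zeta(s)$.

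For every odd $k > 1$, both implications of the corollary (including the height bound $|\gamma| > (1/2)\cot(2\pi/(k+1))$) now follow at once from the theorem. For the remaining case $k = 1$, which lies outside the scope of the theorem, I would handle things by hand: using $\xi'(1/2) = 0$ from Lemma \ref{vanishing of xi_K at 1/2} and the identity $(\xi'/\xi)' = \xi''/\xi - (\xi'/\xi)^2$, one computes
\[
\left(\frac{\xi'}{\xi}\right)'\!\left(\frac{1}{2}\right) = \frac{\xi''(1/2)}{\xi(1/2)},
\]
so that \eqref{equivalent of RH} reduces to the identity of Suman and Das quoted in the introduction, while the height statement becomes vacuous because $\cot(\pi)$ is infinite. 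I expect no substantive obstacle here: the only point requiring even a one-line remark is the reduction $\Xi_\Q = \xi$, which is immediate from $\zeta(1/2) \neq 0$, and the $k=1$ case merely requires invoking a previously known result.
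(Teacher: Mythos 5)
Your proposal is correct and is essentially the paper's (implicit) argument: the paper offers no separate proof of this corollary, treating it as the specialization $K=\Q$ of the main theorem, exactly as you do via $\xi_\Q=\xi$ and $\Xi_\Q=\xi$ (the latter because $\zeta(1/2)\neq 0$ gives $m=0$). You are in fact more careful than the paper in noticing that the corollary's hypothesis admits $k=1$ while the theorem requires $k>1$, and your reduction of that case to the Suman--Das identity via $(\xi'/\xi)'(1/2)=\xi''(1/2)/\xi(1/2)$ is a legitimate patch for a gap the paper leaves unaddressed.
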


\begin{corollary}
If $k <  12 \pi \cdot 10^{12}$, then \eqref{equivalent of RH} is equivalent to RH.
\end{corollary}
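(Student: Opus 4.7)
The plan is to combine Corollary \ref{main corollary} with the state-of-the-art numerical verification of the Riemann Hypothesis. The forward direction (RH implies \eqref{equivalent of RH}) is already contained in Corollary \ref{main corollary}, so the only thing to establish is the reverse implication under the stated bound on $k$.

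First I would invoke Corollary \ref{main corollary} to deduce that if \eqref{equivalent of RH} holds, then $\beta = 1/2$ for every nontrivial zero $\rho = \beta + i\gamma$ with $|\gamma| > (1/2)\cot(2\pi/(k+1))$. Next I would compare this threshold against the height up to which RH is known to hold numerically. Using the elementary inequality $\cot(x) < 1/x$ on $(0,\pi/2)$, one gets
\[
\frac{1}{2}\cot\!\left(\frac{2\pi}{k+1}\right) < \frac{k+1}{4\pi}.
\]
For $k < 12\pi \cdot 10^{12}$, this yields a threshold strictly smaller than $3 \cdot 10^{12}$ (with a bit of room to spare for the $+1$).

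The key external input I would cite is the Platt--Trudgian verification of RH, which establishes that every nontrivial zero of $\zeta(s)$ with $|\gamma| \leq 3 \cdot 10^{12}$ satisfies $\beta = 1/2$. Combining this with the conclusion of Corollary \ref{main corollary} covers all nontrivial zeros: those of height above $(1/2)\cot(2\pi/(k+1))$ are on the critical line by the corollary, and those of height at most this value are on the critical line by the numerical verification (since the threshold lies below $3 \cdot 10^{12}$). Therefore \eqref{equivalent of RH} implies RH in full for $k$ in this range, and together with the forward direction this gives the equivalence.

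There is no serious obstacle here; the proof is essentially a numerical calibration. The only mildly delicate point is ensuring that the constant $12\pi \cdot 10^{12}$ is compatible with the precise height of numerical verification one cites, and confirming that the elementary bound $\cot(x) < 1/x$ is sharp enough that no additional slack is needed. Both checks are routine.
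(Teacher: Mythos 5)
Your proposal is correct and follows essentially the same route as the paper: combine Corollary \ref{main corollary} with the Platt--Trudgian verification of RH up to height $3\cdot 10^{12}$, using the elementary bound $\cot(x)<1/x$ to show the threshold $(1/2)\cot(2\pi/(k+1))$ falls within the verified range (the paper phrases this via the equivalent inequality $\cot^{-1}(x)\leq 1/x$ at $x=6\cdot 10^{12}$). The only caveat, shared equally by the paper's own argument, is the hair's-breadth slack in passing from $k<12\pi\cdot 10^{12}$ to $k+1\leq 12\pi\cdot 10^{12}$ for integer $k$.
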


\begin{proof}
Due to inequality $\cot^{-1} (x) \leq 1/x$ for $x > 0$ we have
\[
\frac{2 \pi}{\cot^{-1}(6 \cdot 10^{12})} \geq 12 \pi \cdot 10^{12} \geq k + 1
\]
which in turn implies that
\[
\frac{1}{2} \cot \left(\frac{2 \pi}{k + 1} \right) \leq 3 \cdot 10^{12}.
\]
Since $\beta = 1/2$ for $|\gamma| \leq 3 \cdot 10^{12}$ \cite{platt2021riemann}, it follows by Corollary \ref{main corollary} above that RH holds.
\end{proof}

\bibliographystyle{alpha}
\bibliography{main.bib}

\end{document}